\numberwithin{equation}{section}
\theoremstyle{plain}
\newtheorem{theorem}{Theorem}[section]
\newtheorem{lemma}[theorem]{Lemma}
\newtheorem{proposition}[theorem]{Proposition}
\newtheorem{cor}{Corollary}
\theoremstyle{definition}
\newtheorem{definition}[theorem]{Definition}
\newtheorem{notation}[theorem]{Notation}
\theoremstyle{remark}
\newtheorem{example}[theorem]{Example}
\begin{document}

\title[Completeness in  Fourier-based  metrics]{On a completeness problem in  Fourier-based probability metrics in $\mathbb{R}^N$}

\author[Ma{\l}gorzata Stawiska]{Ma{\l}gorzata Stawiska}
\address{Mathematical Reviews, 416 Fourth St., Ann Arbor, MI 48103, USA}
\email{stawiska@umich.edu}

\date{\today}    
                                       % Activate to display a given date or no date
\maketitle

\begin{abstract}
We study completeness of the spaces $\mathcal{P}_s^=$ of  probability measures  in $\mathbb{R}^N$ which have equal (prescribed) moments  up to order  $s \in \mathbb{N}$, endowed with the metric $d_s(\mu,\nu)=\sup_{x \in \mathbb{R}^N\setminus 0}\frac{|\hat \mu(x)-\hat \nu(x)|}{|x|^s}$, where $\hat \mu$ is the characteristic function of $\mu$. We prove that the spaces  $(\mathcal{P}_s^=,d_s)$ are complete if  $s$ is even and construct  counterexamples to completeness for all odd $s$. This solves an open problem formulated by J. Carrillo and G. Toscani in 2007 (\cite{CT}).
\end{abstract}

\noindent \subjclass{MSC 2010: Primary 60B10; Secondary 60E10, 33C15}\\
\keywords{Keywords: convergence of probability measures; characteristic function; Fourier transform; moments; completeness; exponential integral function.}

\section{Introduction} \label{s: Intro}

The possibility of introducing a metric on probability measures over a Polish space which metrizes their weak*-convergence is a topic with a long history. Many examples of such metrics are known (for collections of such examples, see e.g. \cite{Du02}, \cite{GSu}, \cite{V}), exploring properties of various quantities and objects associated to probability measures. The systematic  use of metrics based on the Fourier transforms of measures (and more generally, of tempered distibutions) was initiated in early 1950s by J. Deny (\cite{De50}, \cite{De51}). Developing and generalizing ideas in potential theory due to H. Cartan (\cite{Ca41}, \cite{Ca45}), he studied the metric   given for two probability measures $\mu, \nu$ on $\mathbb{R}^N$ by the $L^2$ norm of the Riesz potential of their difference: $d(\mu, \nu)=\bigl \|\frac{\hat{\mu}-\hat{\nu}}{|\cdot|^{s/2}}\bigr \|_2$, with $0<s<N$ (here $\hat \mu, \hat \nu$ are characteristic functions of respectively $\mu$ and $\nu$). Some of his arguments carry over to the case of $s=N$ (the logarithmic potential in $\mathbb{R}^N$)  and were  made explicit in \cite{CKL}. Deny's approach to energy of measures via Fourier transfrom was recently revisited in \cite{FZ}, giving rise to  the concept of the weak $\alpha$-Riesz energy of a signed Radon measure on $\mathbb{R}^N$ (\cite{FZ}, Definition 4.1) and  the theorem that the pre-Hilbert space of Radon measures on $\mathbb{R}^N$ with finite weak $\alpha$-Riesz energy is isometrically embedded into its completion, the Hilbert space  of real-valued tempered distributions with finite energy (\cite{FZ}, Theorem 5.1).\\
In 1990s a new class of metrics (known as Toscani metrics) dependent on a real positive parameter $s>0$,  motivated by applications to statistical physics and optimal transport theory,  was defined  by  using  Fourier transforms (characteristic functions) of (Borel) probability measures on $\mathbb{R}^N$,  namely, $d_s(\mu,\nu)=\sup_{x \in \mathbb{R}^N\setminus 0}\frac{|\hat \mu(x)-\hat \nu(x)|}{|x|^s} \quad s >0$ (see \cite{GTW}, \cite{TV}, \cite{CT} and below for conditions of finiteness and various properties, and \cite{TT} for recent applications  to wealth distribution). Completeness of various spaces of probability measures under these  Fourier-based metrics was considered in more detail in \cite{CT}. Independently, an $L^1$ version of a Fourier-based metric was proposed for probability measures on $\mathbb{R}$ in \cite{BG}: 
$d_r(\mu,\nu)=\int |\hat{\mu}(t)-\hat{\nu}(t)||t|^{-r-1}dt$. For $r \in (1,2)$ it was proved in \cite{BG} that the space of probability measures $\mu$ on $\mathbb{R}$ such that $\int|x|^r d\mu <+\infty$ and $\int xd\mu=c$ ($c \in \mathbb{R}$ fixed) is complete under the metric $d_r$. \\

The aim of our present note is to prove the following theorem, also concerning completeness of  certain spaces of probability measures endowed with the metric $d_s(\mu,\nu)=\sup_{x \in \mathbb{R}^N\setminus 0}\frac{|\hat \mu(x)-\hat \nu(x)|}{|x|^s}$.  

\begin{theorem}  Let $s \in \mathbb{N}$. (a)  For every   even number $s$ and every choice of numbers $M_\beta \in \mathbb{R}_+$ for  $|\beta| \leq  s$, the space of all probability measures    for which $\int_{\mathbb{R}^N}v^\beta d\mu(v)= M_\beta, \ |\beta| \leq  s$,  endowed with the metric $d_s$ is complete. (b) For every odd number $s$ there exist $M_\beta \in \mathbb{R}_+$ for  all $|\beta| \leq  s$ such that the space of all probability measures    for which $\int_{\mathbb{R}^N}v^\beta d\mu(v)= M_\beta, \ |\beta| \leq  s$, endowed with the metric $d_s$ is  not complete.
\end{theorem}

Completeness of such spaces was posed as an open problem in \cite{CT}.  The precise (separate) statements of both parts of the theorem (respectively, Theorems \ref{theorem: evenmom} and \ref{theorem: counterex}), all relevant definitions, auxiliary results, and finally corresponding  proofs and counterexamples  are presented in the next  sections. Note that since we make assumptions only on the moments of order up to an including $s$, not on any higher order, we cannot use a completeness result proved in \cite{CT}  as Proposition 2.7 in that paper. Note also that the authors of  \cite{TT} use a completeness result of the type of Proposition 2.7 in \cite{CT}. Among other things, they make and further generalize the observation that, if the initial datum of   the Fokker-Planck equation is a probability density on the positive half-line with mean one, then so is the solution at any subsequent time. In Theorem 3.7 in \cite{TT} under the assumption of boundedness of the metric $d_3$ between the initial datum and the equilibrium density they prove the convergence of the solution towards the equilibrium in the metric $d_2$. Anyway, they work exclusively on the positive half-line  and their results are only indirectly related to our problem at hand.  As far as other completeness results in literature are concerned,   we will  use (in our Example \ref{example:Cauchy}) a result from \cite{CK}, establishing  completeness of the set of all probability measures in the metric $d_1$ (originally stated in terms of Fourier transforms; for full generality, see Proposition 3.1.in that paper).  Finally, we should mention that yet another class of Fourier-based probability metrics was lately introduced by Cho (\cite{Cho15}), who also studied  relations of these metrics with absolute moments, but we  will not discuss these results here.  \\

\section{Preliminary material}

We will work in $\mathbb{R}^N, \ N\geq 1$.  Unless specified otherwise, $\|\cdot\|$ will denote the Euclidean norm in $\mathbb{R}^N$. By a ``probability measure on $\mathbb{R}^N$"  we mean a (Radon) probability measure on the Borel $\sigma$-algebra generated by the standard topology in $\mathbb{R}^N$.  We will need some standard definitions and results, which can be found e.g. in \cite{Du02}, Section 9:

\begin{definition} 
 Let $\mathcal{C}^b(\mathbb{R}^N)$ be the set of all bounded continuous real-valued functions on $\mathbb{R}^N$. We say that the probability measures $\mu_n$ converge (weakly*) to a probability measure  $\mu$ if and only if  for every $\varphi \in \mathcal{C}^b(\mathbb{R}^N)$, $\int \varphi d\mu_n \to \int \varphi d\mu$ as $n \to \infty$.
\end{definition}
\begin{notation} 
The function
\[
\hat \mu(x)=\int_{\mathbb{R}^N}e^{-i\langle x,v\rangle}d\mu(v), \ x \in \mathbb{R}^N, 
\]
where $\langle \cdot,\cdot \rangle$ denotes the standard scalar product in $\mathbb{R}^N$, is the Fourier transform (characteristic function) of $\mu$.
\end{notation}

Note that the function $\hat \mu$ is continuous and bounded on $\mathbb{R}^N$. Furthermore (cf. \cite{Du02}, Theorem 9.4.4), if $\int_{\mathbb{R}^N}v^\beta d\mu(v)$ is finite for a multi-index $\beta=(\beta_1,...,\beta_N)\in \mathbb{N}_0^N$, then $\hat \mu$ has continuous partial derivative  $D^\beta \hat \mu=\frac{\partial^{|\beta|}}{\partial x_1...\partial x_N} \hat \mu$ everywhere in $\mathbb{R}^N$, satisfying the formula $D^\beta\hat \mu (x)=\int_{\mathbb{R}^N}(iv)^\beta e^{-i\langle x,v\rangle}d\mu(v)$. Recall that $v^\beta=v_1^{\beta_1}...v_N^{\beta_N}$. The quantity $\int_{\mathbb{R}^N}v^\beta d\mu(v)$ is called the moment of $\mu$ of order $\beta$. If $\beta_1,..,\beta_N$ are even numbers, then the existence of $D^\beta\hat \mu (0)$ also implies the finiteness of the moment $\int_{\mathbb{R}^N}v^\beta d\mu(v)$ (cf. e.g. \cite{Sas}, Theorem 1.2.9). For $\beta_1+...+\beta_N$ odd  this is not the case: counterexamples in $\mathbb{R}$ were given in 1930s by A. Zygmund and A. Wintner (see \cite{Lu}, Section 2.3). As we will see, this dichotomy affects   completeness of the spaces $(\mathcal{P}_s^=,d_s)$ studied below, yielding a positive  answer when $s$ is even and a negative one when it is odd.\\

An important relation between the weak* convergence of probability measures and their Fourier transforms is given by the L\'evy continuity theorem:

\begin{theorem}\label{theorem:Levy} (cf. \cite{Du02}, Theorem 9.8.2): If $\mu_n, \ n=1,2,...$ are probability measures on $\mathbb{R}^N$ whose characteristic functions converge for all $x$ to some $g(x)$, where $g$ is continuous at $0$ along  each coordinate axis, then $\mu_n \to\mu$ weakly* to a probability measure $\mu$ with characteristic function $g$.
\end{theorem}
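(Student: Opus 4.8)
The plan is to follow the classical route through tightness and Prokhorov's theorem, so that the real work is to extract compactness of the family $\{\mu_n\}$ from the hypothesis on $g$. First I would note that since $\hat\mu_n(0)=\int d\mu_n=1$ for every $n$, the pointwise limit satisfies $g(0)=1$. The heart of the matter is then to convert the continuity of $g$ at $0$ along the coordinate axes into a uniform control of the tails of the $\mu_n$.

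For this I would use the standard one-dimensional truncation estimate, applied coordinate by coordinate. Fixing a coordinate direction $e_j$ and writing $v_j$ for the $j$-th coordinate of $v$, integrating the real part of the characteristic function over a small symmetric interval gives, by Fubini,
\[
\frac{1}{2\delta}\int_{-\delta}^{\delta}\bigl(1-\operatorname{Re}\hat\mu_n(t e_j)\bigr)\,dt=\int_{\mathbb{R}^N}\Bigl(1-\frac{\sin(\delta v_j)}{\delta v_j}\Bigr)\,d\mu_n(v)\ge \tfrac12\,\mu_n\{\,|v_j|\ge 2/\delta\,\},
\]
where the inequality uses $1-\frac{\sin u}{u}\ge \tfrac12$ for $|u|\ge 2$. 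Since $|\hat\mu_n|\le 1$ and $\hat\mu_n(te_j)\to g(te_j)$ pointwise, the bounded convergence theorem lets me pass to the limit in $n$ on the left-hand side over the finite interval; the continuity of $g$ at $0$ then forces the resulting quantity to tend to $0$ as $\delta\to 0$. Thus $\limsup_n \mu_n\{\,|v_j|\ge 2/\delta\,\}$ can be made arbitrarily small uniformly in $n$ (the finitely many initial $n$ being individually tight and hence absorbed by enlarging the threshold), and intersecting over the $N$ coordinates produces, for each $\varepsilon>0$, a box $K=\prod_j[-M_j,M_j]$ with $\mu_n(K)\ge 1-\varepsilon$ for all $n$. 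This is precisely tightness.

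With tightness in hand, Prokhorov's theorem guarantees that every subsequence of $\{\mu_n\}$ has a further subsequence converging weakly* to some probability measure. For any such limit $\mu$, applying the definition of weak* convergence to the bounded continuous functions $v\mapsto e^{-i\langle x,v\rangle}$ shows $\hat\mu(x)=\lim_k \hat\mu_{n_k}(x)=g(x)$ for every $x$; in particular $g$ is itself a characteristic function and therefore continuous everywhere. By the uniqueness theorem for characteristic functions, all subsequential weak* limits coincide with the single measure $\mu$ determined by $\hat\mu=g$. Since weak* convergence of probability measures on $\mathbb{R}^N$ is metrizable, a sequence all of whose subsequences admit a further subsequence converging to the same limit must itself converge to that limit, so $\mu_n\to\mu$ weakly*, completing the argument.

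I expect the tightness step to be the main obstacle: it is the only place where the hypothesis on $g$ is genuinely used, and it is where the interchange of limits—pointwise convergence of $\hat\mu_n$, bounded convergence in $t$, and finally $\delta\to 0$—must be organized in the correct order. Everything after tightness is a soft compactness-plus-uniqueness argument.
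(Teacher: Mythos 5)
The paper offers no proof of this statement: it is a background result quoted from the cited reference \cite{Du02} (Dudley, Theorem 9.8.2) and used as a black box in Step 1 of the completeness theorem, so there is no internal argument to compare yours against. Your proof is correct, and it is essentially the classical textbook proof of L\'evy's continuity theorem (the same route as in the cited source): coordinatewise truncation converts continuity of $g$ at $0$ along the axes into uniform tail control, Prokhorov's theorem yields subsequential weak* limits, each such limit must have characteristic function $g$, and uniqueness of characteristic functions plus the subsequence principle upgrades subsequential convergence to convergence of the full sequence. The individual steps check out: the identity $\frac{1}{2\delta}\int_{-\delta}^{\delta}\bigl(1-\operatorname{Re}\hat\mu_n(te_j)\bigr)\,dt=\int_{\mathbb{R}^N}\bigl(1-\frac{\sin(\delta v_j)}{\delta v_j}\bigr)\,d\mu_n(v)$ is a valid Tonelli computation with a nonnegative bounded integrand; the bound $1-\frac{\sin u}{u}\ge\frac12$ for $|u|\ge 2$ follows from $|\sin u|\le 1$; and, importantly, your tightness argument uses only continuity of $g$ at $0$ along each coordinate axis, which is exactly the stated (weaker-than-full-continuity) hypothesis, so no strength is silently added. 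One cosmetic simplification: the closing appeal to metrizability of weak* convergence is not needed---apply the subsequence principle directly to the real sequences $\int\varphi\,d\mu_n$ for each fixed bounded continuous $\varphi$, since every subsequence of such a sequence has a further subsequence converging to $\int\varphi\,d\mu$.
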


A significant part of the paper  \cite{CT} is devoted to the study of a metric based on Fourier transform, defined on suitable spaces of probability measures:

\begin{notation}(\cite{CT}, page 88) 
Fix  a  real number $s>0$. For any pair of probability measures $\mu, \nu$ on $\mathbb{R}^N$ we let 
\[
d_s(\mu,\nu)=\sup_{x \in \mathbb{R}^N\setminus 0}\frac{|\hat \mu(x)-\hat \nu(x)|}{|x|^s},
\]
where $\hat \mu(x)$ 
is the Fourier transform (characteristic function) of $\mu$.
\end{notation}

Clearly $d_s$ is nonnegative, symmetric in $\mu, \nu$, zero when $\mu=\nu$, and satisfies the triangle inequality. The inversion formula for Fourier transforms (cf. \cite{Du02}, Theorem 9.5.1) implies that $d_s(\mu,\nu)=0$ only if $\mu=\nu$. Thus $d_s$ defines a metric on a space of probability measures for which it is finite. 
A sufficient condition for finiteness of $d_s(\mu,\nu)$ was proved in \cite{CT}:

\begin{proposition}\label{prop: CTfiniteness}(\cite{CT}, Proposition 2.6): Let $s>0$ be given and let $\mathcal{P}_s$ denote the space of all probability measures on $\mathbb{R}^N$ with finite moments up to order $[s]$. The expression $d_s(\mu,\nu)$   is finite if $\mu, \nu$ have equal moments up to order $[s]$ if $s \not \in \mathbb{N}$ or up to order $s-1$ if $s \in \mathbb{N}$.
\end{proposition}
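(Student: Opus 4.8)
The plan is to put $g=\hat\mu-\hat\nu$ and show directly that $|g(x)|\le C|x|^s$ for every $x\neq 0$, with a constant $C$ depending only on $s$ and on the moments of $\mu,\nu$; this gives $d_s(\mu,\nu)\le C<\infty$. For $|x|\ge 1$ the crude estimate $|g(x)|\le|\hat\mu(x)|+|\hat\nu(x)|\le 2$ already yields $|g(x)|/|x|^s\le 2$, so the genuine content is the behavior as $x\to 0$; the bound I will construct happens to be uniform and covers both regimes at once. Throughout write $m=[s]$ when $s\notin\mathbb{N}$ and $m=s-1$ when $s\in\mathbb{N}$, so that in either case $\mu,\nu$ have equal moments up to order $m$ and $m<s\le m+1$.

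First I would record the effect of the equal-moments hypothesis. By the differentiation formula for $\hat\mu$ quoted above, $D^\beta\hat\mu(0)=i^{|\beta|}\int v^\beta\,d\mu(v)$ for $|\beta|\le m$ (and likewise for $\nu$), so equality of all moments of order $\le m$ amounts to $D^\beta g(0)=0$ for $|\beta|\le m$. I would not feed this into Taylor's formula with Peano remainder, since that only gives $g(x)=o(|x|^m)$, which is too weak because $s>m$. Instead I would expand the integrand itself. With $\lambda=\mu-\nu$ and the order-$m$ Taylor expansion of $t\mapsto e^{-it}$, the polynomial part $\sum_{k\le m}\frac{(-i)^k}{k!}\langle x,v\rangle^k$ integrates against $\lambda$ to a linear combination of moments of $\mu$ and $\nu$ of order $\le m$, which cancel. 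This leaves the remainder representation
\[
g(x)=\int_{\mathbb{R}^N} R_m(\langle x,v\rangle)\,d\lambda(v),\qquad R_m(t):=e^{-it}-\sum_{k=0}^{m}\frac{(-it)^k}{k!},
\]
the interchange of the finite sum with the integral being justified by finiteness of the moments of order $\le m$.

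The estimate then rests on the elementary two-sided bound for the exponential remainder,
\[
|R_m(t)|\le\min\!\Big(\frac{|t|^{m+1}}{(m+1)!},\ \frac{2|t|^{m}}{m!}\Big),\qquad t\in\mathbb{R},
\]
obtained by induction on $m$ (or by repeated integration). From it I would extract the single homogeneous bound $|R_m(t)|\le C_s|t|^s$ valid for all $t$, with $C_s=\max\{1/(m+1)!,\,2/m!\}$: for $|t|\le 1$ use the first estimate together with $|t|^{m+1}\le|t|^s$ (since $m+1\ge s$), and for $|t|\ge 1$ use the second together with $|t|^{m}\le|t|^s$ (since $m\le s$). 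Combined with $|\langle x,v\rangle|\le|x|\,|v|$ this gives
\[
|g(x)|\le C_s\,|x|^s\int_{\mathbb{R}^N}|v|^s\,d|\lambda|(v)\le C_s\,|x|^s\Big(\textstyle\int|v|^s\,d\mu+\int|v|^s\,d\nu\Big),
\]
which is the desired uniform bound.

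The point on which everything turns — and where I expect to have to be careful — is the integrability of $|v|^s$, i.e. finiteness of the $s$-th absolute moments of $\mu$ and $\nu$. When $s\in\mathbb{N}$ this is exactly the top moment carried by $\mathcal{P}_s$, so nothing beyond the stated hypotheses is used. When $s\notin\mathbb{N}$ the exponent $s$ lies strictly between the consecutive integers $m$ and $m+1$, and it is precisely finiteness of $\int|v|^s\,d\mu$ that makes the two-regime bound on $R_m$ usable; the splitting at $|t|=1$ in the key inequality is exactly what converts control just above order $m$ into the needed factor $|x|^s$. This fractional-moment control is the heart of the matter (and is genuinely necessary: a symmetric $\alpha$-stable law with $\alpha<s<m+1$ has equal moments up to order $m$ against, say, a Gaussian, yet $g(x)\sim|x|^\alpha$ forces $d_s=\infty$). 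Away from this point the argument is routine: the cancellation of the polynomial part is bookkeeping, and the large-$|x|$ behavior is already handled by $|g|\le 2$.
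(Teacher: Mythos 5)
Your argument is correct and follows a genuinely different route from the paper's. The paper quotes the proposition from \cite{CT} and sketches the reasoning in the Remark that follows it: equality of moments forces equality of derivatives of $\hat\mu$ and $\hat\nu$ at the origin, so Taylor's formula with Peano remainder applied to $\hat\mu-\hat\nu$ gives the required smallness near $0$ (for $s\in\mathbb{N}$, an $O(\|x\|^s)$ bound whose constant involves the $s$-th derivative maps at $0$), while the trivial bound $|\hat\mu-\hat\nu|\le 2$ disposes of $x$ away from the origin. You work on the other side of the transform: you expand $e^{-it}$ to order $m$ inside the integral, cancel the polynomial part against the equal moments, and control the remainder by the elementary bound $|R_m(t)|\le\min\bigl(|t|^{m+1}/(m+1)!,\,2|t|^m/m!\bigr)\le C_s|t|^s$. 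Your route is more quantitative: it yields the explicit uniform estimate $d_s(\mu,\nu)\le C_s\bigl(\int|v|^s\,d\mu+\int|v|^s\,d\nu\bigr)$, covering all $x$ at once, and it makes visible exactly which moment is being used. The paper's route is purely local and, as its Remark emphasizes, needs only differentiability of the characteristic functions at $0$ rather than existence of moments --- a weakening your argument cannot accommodate, since it integrates $|v|^s$ against the measures.

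The place where the two approaches part ways in substance is the fractional case. For $s\notin\mathbb{N}$ your proof needs $\int|v|^s\,d\mu,\ \int|v|^s\,d\nu<\infty$, which does not follow from the hypothesis as literally stated here (finite moments up to order $[s]$ only). You flagged this, and your stable-law example (one needs $m<\alpha<s$, so in practice $s<2$) shows the literal statement is in fact false without the $s$-th absolute moment; this matches the original definition of $\mathcal{P}_s$ in \cite{CT}, which requires finiteness of the moment of order $s$ itself rather than just of order $[s]$. This also pinpoints a weakness in the paper's own sketch: for $s\notin\mathbb{N}$ the Remark infers $d_s(\mu,\nu)<+\infty$ from $\hat\mu(x)-\hat\nu(x)=o(\|x\|^{[s]})$ near $0$, but since $s>[s]$ an $o(\|x\|^{[s]})$ bound does not imply an $O(\|x\|^s)$ bound, and your example is precisely a counterexample to that inference. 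So your proof is not merely different; on the fractional case it is the more careful of the two.
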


The proof given there is not conceptually difficult, but still quite involved. A much simpler way to prove this statement (without estimates of derivatives based on Lemma 2.5 of \cite{CT}) would be just to apply  Taylor's formula at $0 \in \mathbb{R}^N$ with remainder in Peano form (cf. \cite{Sch81a}, Ch. III, Section 5, Theorem 28) to the characteristic functions $\hat \mu$, $\hat \nu$, which by the assumptions on moments are $[s]$ times continuously differentiable in $\mathbb{R}^N$:
\[
\hat{\mu}(x) = \hat \mu(0)+\hat \mu'(0).x+...+\frac{\hat{\mu}^{([s])}(0)}{[s]!}.(x,...,x)+\alpha(x) \|x\|^{[s]},
\]
 where $\alpha$   tends to $0$ together with $x$. We omit the obvious details.

In what follows we will also use the  uniqueness property in Taylor's theorem:

\begin{theorem}(\cite{Sch81a}, Theorem III.7.38): Let $f$ be a $m$-fold differentiable mapping from an open set $\Omega \subset E$, where $E$ is an affine normed space, into a normed vector space $F$. If there exist $k$-linear continuous symmetric mappings $L_k$ of the space $E^k$ into $F$, $k=1,...,m$, and an element $L_0 \in F$ such that 
\[
f(a+h)=L_0+L_1h+\frac{L_2}{2!}h^2+...+\frac{L_m}{m!}h^m+\alpha\|h\|^m,
\]
where $\alpha=\alpha(h)$ tends to $0$ along with $h$, then necessarily
\[
L_k=f^{(k)}(a), \quad k=1,...,m
\]
and
\[
L_0=f(a).
\]
\end{theorem}

\section{Completeness in  the metric $d_s$: positive results ($s$ even)}\label{s: positive}

Some subspaces of $\mathcal{P}_s$ are complete with respect to the metric $d_s$. An example is presented in  \cite{CT}: given $s,a>0$, let us denote by $X_{s,a,M}$ the set of probability measures $\mu \in \mathcal{P}_{s+a}(\mathbb{R}^N$ such that $\int_{\mathbb{R}^N}v^\beta d\mu(v)=M_\beta \in \mathbb{R}_+$ for all multi-indices $|\beta|\leq [s]$ with $M_\beta$ fixed numbers and $\int_{\mathbb{R}^N}v^{s+a} d\mu(v)\leq M_{s+a}\in \mathbb{R}_+$, where the set of all $M_\beta$ and $M_{s+a}$ is denoted simply by $M$.  By Proposition 2.7 in  \cite{CT}, the set $X_{s,a,M}$ endowed with the distance $d_s$ is a complete metric space. But, as noticed in Remark 2.8 of \cite{CT}, the proof of this Proposition given in that paper does not establish the completeness of the set $\mathcal{P}_s^=$ of probability measures $\mu \in \mathcal{P}_s(\mathbb{R}^N)$ with $s \in \mathbb{N}$, such that $\int_{\mathbb{R}^N}v^\beta d\mu(v) = M_\beta\in \mathbb{R}_+$ for all multi-indices $|\beta|\leq s$ with $M_\beta$ given, endowed with the distance $d_s$. Consequently, an open problem was formulated within the same remark as follows: ``It would be nice to prove or rather disprove such statement at least for the $d_2$ distance."   Below we are going to solve this problem, proving completeness of $(\mathcal{P}_s^=,d_s)$ for $s$ even and disproving it for $s$ odd.   The behavior of spaces $(\mathcal{P}_2^=,d_2)$ is thus different from that  conjectured in  \cite{CT}.

\begin{notation}\label{notation:Ps}
Fix  a  natural number $s$ and  numbers $M_\beta \in \mathbb{R}_+$ for all $|\beta| \leq  s$. We let $\mathcal{P}_s^=$ denote the space of all probability measures in $\mathcal{P}_s$ which have equal moments up to order $s$, i.e.,   for which $\int_{\mathbb{R}^N}v^\beta d\mu(v)= M_\beta$ for all $|\beta| \leq  s$.
\end{notation} 

Strictly speaking, the space $\mathcal{P}_s^=$ depends also on the  numbers $M_\beta, |\beta| \leq  s$, but we suppress this dependence in the notation.  Instead, in what follows we will specify whether we use arbitrary $M_\beta$ or concrete ones.

First we will prove the following necessary condition:

\begin{lemma} \label{lemma:diff0} Fix an $s \in \mathbb{N}$ and arbitrary $M_\beta \in \mathbb{R}_+$ for all $|\beta| \leq  s$. Let  $\mathcal{P}_s^=$  be defined as in Notation \ref{notation:Ps} and let $\mu_n \in \mathcal{P}_s^=$ be a convergent sequence with respect to the metric $d_s$. Then the characteristic function $\hat{\mu}$ of the limit measure $\mu$ is $s$- fold differentiable at $0$, with $D^\beta\hat \mu(0)=M_\beta$ for all $|\beta|=s$.
\end{lemma}

\begin{proof}

  Let $L_0=1$, $L_k = D^{(k)}\hat \mu_n(0), \quad k=1,...,s$. Recall that 

\[
D^{(k)}\hat \mu_n(0)(x_1,...,x_k)=\sum_j\frac{{\partial}^k f}{\partial x_{j_1}...\partial x_{j_k}}(0)x_{1,j_1}x_{2,j_2}...x_{k,j_k},
\]
where $j=(j_1,...j_k)$, $|j|=j_1+...+j_k=k$. From the assumption of equality of moments of corresponding orders up to $s$ for all $\mu_n$, the mapping $L_k$ is the same $k$-linear continuous symmetric mapping for all $n \in \mathbb{N}$. We already know that $\hat \mu (0)=1$. Let $\varepsilon >0$ be given. We estimate 

\[
\frac{\|\hat \mu(x) -(L_0+L_1.x+...+\frac{L_s}{s!}.(x,...,x))\|}{\|x\|^s}
\]
from above by 
\[
\frac{\|\hat \mu_n(x) -(L_0+L_1.x+...+\frac{L_s}{s!}.(x,...,x))\|}{\|x\|^s}+\frac{\|\hat \mu_n (x)-\hat \mu(x)\|}{\|x\|^s}, 
\]
where $n$ is fixed so that $\frac{\|\hat \mu_n (x)-\hat \mu(x)\|}{\|x\|^s}<\varepsilon/2$ (by $d_s$-convergence of $\mu_n$ to $\mu$  such $n$ exist). By  Taylor's formula we can pick a  $\delta >0$ such that $\frac{\|\hat \mu_n(x) -(L_0+L_1.x+...+\frac{L_s}{s!}.(x,...,x))\|}{\|x\|^s} < \varepsilon/2$ as $\|x\|<\delta$. This proves that $\|\hat \mu(x) -L_0+L_1.x+...+\frac{L_s}{s!}.(x,...,x)\|=o(\|x\|^s)$. By the uniqueness in  Taylor's theorem, $L_k=D^{(k)}\hat \mu(0)$ for all  $k=1,...,s$. This implies the differentiability of $\hat \mu$  at $0$ and also the   equality of all partial derivatives of $\hat \mu$  at $0$ with the derivatives of corresponding orders of all  $\hat \mu_n$ at $0$.

\end{proof}

Lemma \ref{lemma:diff0} allows us to give an example of a sequence of probability measures on $\mathbb{R}$ which is not a Cauchy sequence with respect to the metric $d_1$. 

\begin{example} \label{example:Cauchy}
Let $\mu_n$ be the measure with density $p_n(x)=c_n(1+x^2)^{-(1+1/n)}, \ n=1,2,...$. The constants $c_n$ are \[c_n=\frac{\Gamma(1+\frac{1}{n})}{\sqrt{\pi} \Gamma (\frac{1}{2}+\frac{1}{n})},\] with $\Gamma$ denoting the standard (Euler) gamma function. The sequence $\mu_n$ converges weakly* to the Cauchy distribution on $\mathbb{R}$ (by e.g. Proposition 1.6.6 in \cite{Sas}). By Theorem 6.13 in \cite{We}, the characteristic function of $\mu_n$ is $\varphi_n(t)=c_n\frac{2^{1/n}}{\Gamma(1+1/n)}|t|^{1/2+1/n}K_{1/2+1/n}(|t|), t \in \mathbb{R}$, where $K_\nu$ is the modified Bessel function of the second kind of order $\nu$. Let $\mathcal{K}_1$ denote the space of characteristic functions $\varphi$ such that $\sup_{t \in \mathbb{R}} \frac{|\varphi(t)-1|}{|t|}<\infty$. We have $\varphi_n \in \mathcal{K}_1$.  Suppose $\mu_n$ is a Cauchy sequence with respect to $d_1$. By Proposition 3.10 in \cite{CK}, the space $\mathcal{K}_1$ is complete with the metric $\|\varphi-\psi\|_1=\sup_{t \in \mathbb{R}} \frac{|\varphi(t)-\psi(t)|}{|t|}$, so $\varphi_n$ converge in $\|\cdot\|_1$ to a characteristic function $\varphi$ of some probability measure $\mu$ or equivalently, $\mu_n$ converge to $\mu$ in the metric $d_1$. By Lemma \ref{lemma:diff0}, $\varphi$ is differentiable at $0$. But $\varphi_n(t) \to \exp (-|t|)$ as $n \to \infty$, which is a contradiction.
\end{example}

We will also need the following auxiliary result:

\begin{lemma} \label{lemma:unilip} For $s \geq 2$ consider a sequence of measures $\{\mu_n\}_{n \in \mathbb{N}} \subset \mathcal{P}_s^=$. Then  the characteristic functions $\hat \mu_n$ are uniformly Lipschitz in $\mathbb{R}^N$. 
\end{lemma} 
\begin{proof}

It is enough to show that all derivatives $D^\beta \hat \mu_n(x), \ |\beta|=1$ are uniformly bounded. 
Fix a multi-index $\beta$ with $|\beta|=1$. Then $|D^\beta \hat \mu_n(x)| \leq \int_{\mathbb{R}^N}|v^\beta|d\mu_n(v) \leq \int_{\mathbb{R}^N} \|v\|d\mu_n(v) \leq 1 +\int_{\|v\|>1}\|v\|^2d\mu_n(v)$. From the assumption that $\{\mu_n\}_{n \in \mathbb{N}} \subset \mathcal{P}_s^=$ with $s \geq 2$ (existence and equality of all corresponding moments of order $2$ for all $\mu_n$) we infer the existence of common bound for all  $|D^\beta \hat \mu_n(x)|, x \in \mathbb{R}^N$.

\end{proof}

Now we will prove a positive result concerning completeness of the metric spaces $(\mathcal{P}_s^=, d_s)$:

\begin{theorem}\label{theorem: evenmom}
Let $s$ be an even number. Fix the sequence of numbers $M_\beta \in \mathbb{R}_+, \ |\beta| \leq  s$ and consider the corresponding space $\mathcal{P}_s^=$.  Let the sequence of measures $\{\mu_n\}_{n \in \mathbb{N}} \subset \mathcal{P}_s^=$ be a Cauchy sequence in the metric  $d_s$. Then there exists a probability measure  $\mu \in \mathcal{P}_s^=$ such that $d_s(\mu_n,\mu) \to 0$ as $n \to \infty$.   
\end{theorem}

\begin{proof}
Since  $\{\mu_n\}_{n \in \mathbb{N}} \subset \mathcal{P}_s^=$ is a Cauchy sequence with respect to the metric $d_s$, then  for every $\varepsilon >0$ there is an $n_\varepsilon \in \mathbb{N}$ such that for every $n,m > n_\varepsilon$ one has the inequality $\sup_{x \in \mathbb{R}^N \setminus 0}\frac{|\hat\mu_n(x)-\hat\mu_m(x)|}{|x|^s}<\varepsilon$. We will first show (similarly to  Proposition  2.7 in \cite{CT}, but again without their Lemma 2.5) that $\mu_n$ converges weakly* to a probability measure $\mu$.   To do this, observe that for any fixed $x\in \mathbb{R}^N$ the sequence  $\{\hat \mu_n(x)\}_{n \in \mathbb{N}}$ is a Cauchy sequence in $\mathbb{C}$ (for $x=0$ the sequence is constant, of value $1$). Hence the sequence of Fourier transforms $\{\hat \mu_n\}$ converges pointwise to some function $g$ on $\mathbb{R}^N$ with $g(0)=1$. Since $s \geq 2$, Lemma \ref{lemma:unilip}  implies that the functions $\hat \mu_n$ are uniformly Lipschitz  in $\mathbb{R}^N$. By Ascoli 's theorem (cf. \cite{Sch81b}, Chapter VII, Section 6, Theorem 48, Corollary 1 and preceding examples), $\hat\mu_n$ converge uniformly on compact subsets to a continuous function $g$.  By L\'evy continuity theorem  $g$ is a Fourier transform of a probability measure $\mu$ and $\mu_n \to \mu$ weakly*  as $n \to \infty$. Similarly as in Lemma \ref{lemma:unilip} it can be shown for every $j=1,...,s-1$ that the maps $D^\beta\mu_n, |\beta|=j$ are uniformly Lip\-schitz, so Ascoli's theorem can be applied repeatedly to show that $\hat \mu$ has all derivatives up to order $s-1$ at $0$ and they are equal to the corresponding derivatives of $\hat \mu_n$. In particular, $d_s(\mu_n,\mu)$ is finite for all $n$.\\

We will now show that $d_s(\mu_n,\mu) \to 0$. Note that our assumptions imply that the sequence of complex-valued functions $f_n(x)=\frac{\hat\mu_n(x)-\hat\mu(x)}{|x|^s}$ is a Cauchy sequence in the space $\mathcal{C}^b(\mathbb{R}^N \setminus 0)$ of continuous bounded (complex-valued) functions in $\mathbb{R}^N \setminus 0$ endowed with the supremum norm. This is a complete metric space, so there exists a function $f \in \mathcal{C}^b(\mathbb{R}^N \setminus 0)$ such that $f_n$ converge to $f$ uniformly in $\mathbb{R}^N \setminus 0$. Hence for every $x \ne 0$ we have $|\hat \mu_n(x) -\hat \mu (x)| \to |x|^sf(x)$ as $n \to \infty$. The convergence of $\hat \mu_n$ to $\hat \mu$  implies that for every $x\ne 0$ one has $|x|^s|f(x)|=0$, and so $f(x)=0$ for every $x\ne 0$. This proves the claim that $d_s(\mu_n,\mu) \to 0$ as $n \to \infty$.\\

Finally, since the sequence $\mu_n$ converges in $d_s$, Lemma \ref{lemma:diff0} yields that $\hat \mu$ is $s$-fold differentiable at $0$ and that the partial derivatives at $0$ of all orders $\beta, \ |\beta|\leq s$, are equal to corresponding derivatives  of $\hat \mu_n$ at $0$, that is, to the numbers $M_\beta$. Let $\beta =(2\gamma_1,...,2\gamma_N)$ with $|\beta| \leq  s$. Then  the moment of order $\beta$ of $\mu$ exists and is equal to $M_\beta$.  In particular, $\mu$  has  all moments of orders $(s,0,...,0)$, $(0,s,0,...,0)$,..., $(0,...0,s)$ and hence also of orders $(s',0,...,0)$, $(0,s',0,...,0)$,..., $(0,...0,s')$ for all $1 \leq s' < s$, which are equal to the corresponding numbers  $M_\beta$. It remains to prove the existence of moments of orders $\beta=(\beta_1,...,\beta_N)$ with $|\beta|=s$    such that  not all $\beta_i$ are necessarily even and at least two of them are nonzero.  Without loss of generality we can assume that $\beta_1 \neq 0, \beta_2\neq 0$. If $\beta_1+\beta_2=s$, then 
$\int_{\mathbb{R}^N} |x_1|^{\beta_1}|x_2|^{\beta_2}|x_3|^{\beta_3}...|x_N|^{\beta_N}=  \int_{\mathbb{R}^N} |x_1|^{\beta_1}|x_2|^{\beta_2} d\mu \leq \int_{|x_1| \leq |x_2|}|x_2|^s +\int_{|x_2| < |x_1|}|x_1|^s < \infty$. If $\beta_1+\beta_2<s$, then by the same argument $|x_1|^{\beta_1}|x_2|^{\beta_2}  \in L^p(\mu)$, where $p =s/(\beta_1+\beta_2) >1$. The exponent  $q=s/(\beta_3+...+\beta_N)$ satisfies $1/p +1/q=1$.  If there is only one coordinate $\beta_i \neq 0, i=3,...N$, then $\int_{\mathbb{R}^N} (|x_i|^{\beta_i})^q= \int_{\mathbb{R}^N} |x_i|^s <\infty$. Suppose that for any combination of natural $\beta_{i_1} \geq 1,...,\beta_{i_K} \geq 1$  with $3 \leq i_1 <...<i_K \leq N$ and $\beta_1+\beta_2+\beta_{i_1}+...+\beta_{i_K}=s$ we have $\int_{\mathbb{R}^N} (|x_{i_1}|^{\beta_{i_1}}...|x_{i_K}|^{\beta_{i_K}})^q d \mu < \infty$. Inductively in $K$, take $\beta_{i_1} \geq 1,...,\beta_{i_{K+1}} \geq 1$  with $3 \leq i_1 <...<i_{K+1} \leq N$ and $\beta_1+\beta_2+\beta_{i_1}+...+\beta_{i_{K+1}}=s$. Then \begin{multline*}
\int_{\mathbb{R}^N} (|x_{i_1}|^{\beta_{i_1}}...|x_{i_{K+1}}|^{\beta_{i_{K+1}}})^q d \mu \leq\\ \int_{|x_{i_1}| \leq |x_{i_{K+1}}|} (|x_{i_2}|^{\beta_{i_2}}...|x_{i_{K+1}}|^{\beta_{i_1}+\beta_{i_{K+1}}})^q d \mu \\+ \int_{|x_{i_1}| > |x_{i_{K+1}}|} (|x_{i_1}|^{\beta_{i_1}+\beta_{i_{K+1}}}...|x_{i_{K}}|^{\beta_{i_K}})^q d \mu \\<\infty.
\end{multline*}
In particular, $|x_3|^{\beta_3}...|x_N|^{\beta_N} \in L^q(\mu)$. By H\"older inequality $\int x^\beta d\mu < \infty$. It follows that   $\int x^\beta d\mu=M_\beta$ for all $|\beta| \leq s$, so $\mu \in \mathcal{P}_s^=$. 
\end{proof}

\section{Completeness in  the metric $d_s$: negative results ($s$ odd)}\label{s: negative}

In this section we take $N=1$. We disprove completeness of the spaces $(\mathcal{P}_s^=,d_s)$ with $s$ odd integer by constructing, for each $s$, a space $\mathcal{P}_s^=$,  a sequence of probability measures $\mu_n$ in $\mathcal{P}_s^=$ and a measure $\mu$ such that $d_s(\mu_n,\mu) \to 0$ as $n \to \infty$, but $\mu \not \in \mathcal{P}_s^=$.
For this purpose we will modify the known example of a probability measure without the first moment but with characteristic function $\varphi$ admitting $\varphi'(0)$, due to A. Wintner (see below).\\

\subsection{Case $s=1$} 

 Consider the probability density function 
\[
p(x)=
\begin{cases} 
0 & \text{if } |x|<2,\\

\frac{C}{x^2\log|x|}  & \text{if } |x| \geq 2.
\end{cases}
\]

We have $2C=(\int_2^{\infty} \frac{1}{x^2\log x}dx)^{-1}=\frac{1}{E_1(\log 2)}$, where $E_1$ is the exponential integral function (\cite{AS}, Section 5) \[E_1(z)=\int_z^\infty\frac{e^{-u}}{u}du, \  |\arg z| < \pi.\] The corresponding characteristic function is 
\[
\varphi(t)=2C\int_2^\infty\frac{\cos tx}{x^2 \log x}dx.
\]

 It is well known (\cite{Lu}, Section 2.3) that $\varphi$ is differentiable at $0$ (with derivative equal to $0$) but  $\mu$ does not have the  first moment.

Our goal is to  find a sequence of probability measures $\mu_n$ with the Fourier transforms $\varphi_n$ such that each $\mu_n$ has the first moment equal to $0$ and
\[
\lim_{n\to\infty}\sup_{t\ne 0}\bigl |\frac{\varphi_n(t)-\varphi(t)}{t}\bigr |=0
\]
Let $\mathcal{P}_1^=$ denote the space of probability measures in $\mathbb{R}$ with the first moment equal to $0$. Then $\mu_n \in \mathcal{P}_1^=$, $d_1(\mu_n,\mu) \to 0$ (in particular, the sequence $\{\mu_n\}$ is Cauchy), but $\mu \not  \in \mathcal{P}_1^=$. 

We will  construct the sequence $\mu_n, n\geq 3$ with the following properties:
First,  each 
$\mu_n$ is symmetric: $\mu_n((-b,-a))=\mu_n((a,b))$ for all $0\le a <b$.   Second, $\mu_n$ is a sum of two nonnegative finite measures $\sigma_n +\gamma_{n}$, where    $\sigma_n$ is the truncation of $\mu$ to the interval $[-n,n]$: $\sigma_n(X)=\mu(X\cap [-n,n])$ for any Borel measurable set $X\subset \mathbb{R}$. In more detail, 
 $\sigma_n$ has the following density function: 
 \begin{eqnarray*}
p_n(x)=
\begin{cases}
0, \textrm{ for } |x|<2,\\
\frac{C}{x^2\log |x|}, \textrm{ for }n\ge |x|\ge 2,\\
0 , \textrm{ for } |x|>n.
\end{cases}
\end{eqnarray*}
 We take $\gamma_{n}$ to have the   density function 
\begin{eqnarray}\notag
r_{n}(x)=
\begin{cases}
0, \textrm{ for } |x|<n \\
\frac{\omega_n}{|x|^3} \textrm{ for }  |x| \ge n,
\end{cases}\\
\omega_n=\frac{C\int_{n}^{\infty}\frac{1}{x^2\log x}dx}{\int _n^\infty{\frac{1}{x^3}dx}}=(2n^2 C)\int_{n}^{\infty}\frac{1}{x^2\log x}dx.\label{defomegn}
\end{eqnarray}  
Then each $\mu_n$ has the first moment equal to $0$. 

Let $\tau_n=\mu-\alpha_n$.  
%Then the density function of $\tau_n$ is:
%\begin{eqnarray*}
%q_n(x)=
%\begin{cases}
%0, \textrm{ for } |x|<n,\\
%\frac{C}{x^2\log |x|}, \textrm{ for } |x|\ge n
%\end{cases}
%\end{eqnarray*}
Denote by $f_n(t), g_n(t), h_n(t)$ the Fourier transforms of $\sigma_n$, $\gamma_n$ and $\tau_n$  respectively.  Then $\hat {\mu_n}(t)=\varphi_n(t)=f_n(t)+g_n(t)$ and $\hat \mu (t)= \varphi(t)=f_n(t)+h_n(t)$. 
Observe that 
\begin{eqnarray*}
h_n(0)=2C\int_n^{\infty} \frac{1}{x^2 \log x}dx=g_n(0)=\omega_n\frac{1}{n^2}.
\end{eqnarray*}
Thus
%\begin{eqnarray*}

\begin{multline}
|\varphi_n(t)-\varphi(t)|=|g_n(t)-h_n(t)|=|(g_n(t)-g_n(0))-(h_n(t)-h_n(0))|\\
\le |g_n(0)-g_n(t)|+|h_n(0)-h_n(t)|.
\end{multline}

To  prove the convergence of $\mu_n$ to $\mu$ in the metric $d_1$ 
it is thus enough to show that
\begin{eqnarray}\label{aproxprop1}
\lim_{n\to\infty}\sup_{t\ne 0}\bigl |\frac{h_n(0)-h_n(t)}{2Ct}\bigr |=0, \ \lim_{n\to\infty}\sup_{t\ne 0}\bigl |\frac{g_n(0)-g_n(t)}{2Ct}\bigr |=0.
\end{eqnarray}

We will now prove these equalities. Our arguments will proceed similarly  to those  in the proof of existence of $\varphi'(0)$ in \cite{Lu}, based on the  estimates $$0\le 1-\cos z\le \min(2,z^2)$$ for any real $z$. We will need other estimates as well.

\begin{lemma}\label{lemma:integrals}
The following inequalities hold:
\begin{eqnarray}\notag
\frac{x}{\log x}\bigg|_n^y\le \int_n^y\frac{dx}{\log x}\le \frac{1}{1-1/\log n} \frac{x}{\log x}\bigg|_n^y\le \frac{1}{1-1/\log n} \frac{y}{\log y}
\end{eqnarray}
for $y>n\ge 3$..
\end{lemma}
\begin{proof}

Integration by parts with $u'=1, v=\frac{1}{\log x}$  gives 
\begin{eqnarray*}
\int_n^y \frac{dx}{\log x}=\frac{x}{\log x}\bigg|_n^y +\int_n^y\frac{dx}{(\log x)^2}=
\frac{x}{\log x}\bigg|_n^y +\frac{x}{(\log x)^2}\bigg|_n^y +2 \int_n^y\frac{dx}{(\log x)^3}.
\end{eqnarray*}
From 

$$\int_n^y\frac{dx}{(\log x)^2}\le \frac{1}{\log n} \int_n^y \frac{dx}{\log x} $$ 
we deduce
$$\int_n^y \frac{dx}{\log x}\le \frac{x}{\log x}\bigg|_n^y  +\frac{1}{\log n}\int_n^y \frac{dx}{\log x}$$

and 
\begin{eqnarray*}
\frac{x}{\log x}\bigg|_n^y\le \int_n^y\frac{dx}{\log x}\le \frac{1}{1-1/\log n} \frac{x}{\log x}\bigg|_n^y\le \frac{1}{1-1/\log n} \frac{y}{\log y} 
\end{eqnarray*}
 for  $y>n\ge 3$ as claimed.
\end{proof}

\begin{lemma} For $n\ge 3$,
\[
\frac{1}{1+1/\log n} \frac{1}{n\log n}<\int_n^\infty \frac{dx}{x^2\log x}<\frac{1}{n\log n}.
\]
\end{lemma}
\begin{proof} These inequalities follow from the estimates for the function $E_1$ (\cite{AS}, formula 5.1.19):
\begin{equation}\label{equation:intexp}
\frac{1}{x+1} < e^xE_1(x) \leq \frac{1}{x}, \ x >0.
\end{equation}

\end{proof}

\begin{cor}
\[
\frac{1}{1+1/\log n} \frac{2nC}{\log n}<\omega_n<\frac{2nC}{\log n}.
\]
\end{cor}

%First
%\begin{eqnarray*}
%\int_n^\infty \frac{dx}{x^2\log x}<\frac{1}{\log n}\int_n^\infty \frac{dx}{x^2}=\frac{1}{n\log %n}.
%\end{eqnarray*}
%Next  integrate by parts with $u'=\frac{1}{x^2}, v=\frac{1}{\log x}$.
%Then
%\begin{eqnarray*}
%\int_n^\infty \frac{dx}{x^2\log x}=-\frac{1}{x\log x}|_n^{\infty}-\int_n^\infty \frac{dx}%{x^2(\log x)^2}<\frac{1}{n\log n}.
%\end{eqnarray*}
%Since 
%\begin{eqnarray*}
%\int_n^\infty \frac{dx}{x^2(\log x)^2}<\frac{1}{\log n}\int_n^\infty \frac{dx}{x^2\log %x}\Rightarrow \int_n^\infty \frac{dx}{x^2\log x} > -\frac{1}{x\log x}|_n^{\infty}-\frac{1}{\log %n}\int_n^\infty \frac{dx}{x^2\log x}, 
%\end{eqnarray*}
%we deduce
%\begin{eqnarray}\label{inx2logxes}
%\frac{1}{1+1/\log n} \frac{1}{n\log n}<\int_n^\infty \frac{dx}{x^2\log x}<\frac{1}{n\log n}.
%\end{eqnarray}

  \begin{theorem}\label{theorem:one} Let, $\mu, \mu_n, \ n \geq 3$ be probability measures defined as above, with characteristic functions $\varphi, \varphi_n$. Then
\[
\lim_{n\to\infty}\sup_{t\ne 0}\bigl |\frac{\varphi_n(t)-\varphi(t)}{t}\bigr |=0.
\]

\end{theorem}

\begin{proof}

To show the first equality in \ref{aproxprop1}, 
observe that 
\begin{eqnarray*}
0\le \frac{h_n(0)-h_n(t)}{2C}=\int_n^{\infty}\frac{1-\cos tx}{x^2\log x}dx.
\end{eqnarray*}
Since $h_n(t)$ is an even function, it is enough to assume that $t>0$.  

First assume that $t\ge 1/n$.   Then for $x\ge n$ we  use the  inequality $0\le 1-\cos tx\le 2$.  Therefore 
\begin{eqnarray*}
0\le \frac{h_n(0)-h_n(t)}{2C}=\int_n^{\infty}\frac{1-\cos tx}{x^2\log x}dx\le 2\int_n^\infty \frac{1}{x^2\log x}  \leq \frac{2}{n\log n}.
\end{eqnarray*}
Thus
\begin{eqnarray*}
0\le \frac{h_n(0)-h_n(t)}{2Ct}\le \frac{2}{\log n} \textrm{ for } t\ge 1/n.
\end{eqnarray*}
Next assume that $0<t<1/n$  
 and for $tx<1$ use the inequality $1-\cos tx\le (tx)^2$.   Then 
\begin{eqnarray*}
\frac{h_n(0)-h_n(t)}{2C}= \int _n^{1/t} \frac{1-\cos tx}{x^2\log x}dx + \int _{1/t}^{\infty} \frac{1-\cos tx}{x^2\log x}dx\le \\
t^2\int_{n}^{1/t} \frac{dx}{\log x}+2\int_{1/t}^{\infty} \frac{dx}{x^2\log x}\le 
t^2\int_{n}^{1/t} \frac{dx}{\log x}+2\frac{1}{\log (1/t)}\int_{1/t}^\infty \frac{1}{x^2}dx\le\\
\frac{1}{(1-1/\log n)}t^2 \frac{1/t}{\log (1/t)}+2t/(\log1/t)\le  (2+\frac{1}{(1-1/\log n)})t(1/\log n),
\end{eqnarray*}
for $0<t<1/n$. 
This establishes the first equality in  \eqref{aproxprop1}. 

We now establish  the second equality in  \eqref{aproxprop1}.  Recall that
\begin{eqnarray*}
0\le \frac{g_n(0)-g_n(t)}{2C}=\frac{\omega_n}{2C}\int_n^\infty \frac{1 -\cos tx}{x^3}dx
\end{eqnarray*}
As with $h_n$, we can assume that $t>0$.   Consider two cases.   First $t\ge 1/n$.  Then we use the inequality $1-\cos tx\le 2$ and the bound for $\omega_n$ to deduce
\begin{eqnarray*} 
0<\frac{g_n(0)-g_n(t)}{2Ct}\le n\frac{g_n(0)-g_n(t)}{2C}\le n\frac{\omega_n}{2C} 2\int_n^\infty \frac{dx}{x^3}< \frac{1}{\log n}.
\end{eqnarray*}

Next we consider the case where $0<t<1/n$.  Here for $tx<1$ we will use the inequality $1-\cos tx\le (tx)^2$.  Hence

\begin{multline*}
0<\frac{g_n(0)-g_n(t)}{2C}= \frac{\omega_n}{2C}\big(\int _n^{1/t} \frac{1-\cos tx}{x^3}dx + \int _{1/t}^{\infty} \frac{1-\cos tx}{x^3}dx\big)\le \\
\frac{n}{\log n}\big(t^2\int_n^{1/t}\frac{dx}{x} +2\int _{1/t}\frac{dx}{x^3}\big)=\frac{t}{\log n}\big(-(nt)\log (nt) +nt\big)
\end{multline*} 

As $0<nt<1$, we deduce the inequality $-(nt)\log(nt)\le 1/ e$ (where $e$ is the natural logarithm base). Thus we get the inequality
\begin{eqnarray*}
0<\frac{g_n(0)-g_n(t)}{2Ct}<\frac{1+1/e}{\log n} \textrm{ for }0<t<1/n. 
\end{eqnarray*}
This shows the second equality in  \eqref{aproxprop1}.

\end{proof}

\subsection{Case $s=2k+1, k \geq 1$.}\label{subsection:greater}
Fix $s=2k+1$, where $k\ge 1$ is an integer. It will be more convenient to denote various quantities as dependent on $k$. Define $\mu_k$ as the probability measure in $\mathbb{R}$ with density
\begin{eqnarray*}
p_k(x)=
\begin{cases}
0, \textrm{ for } |x|<2,\\
\frac{C_k}{x^{2+2k}\log |x|}, \textrm{ for } |x|\ge 2,
\end{cases}\\
C_{k}=\bigl(2\int_2^{\infty} \frac{1}{x^{2+2k}\log x}dx\bigr)^{-1}=\biggl(\frac{2}{2k+1}E_1((2k+1)\log 2)\biggr)^{-1} .
\end{eqnarray*}
Then  $\mu_s$ has all the moments  up to order $2k$ but does not have the moment of order $2k+1=s$.   Concretely,

\[
\  M_{2\ell}=2\int_2^\infty\frac{C_kx^{2\ell}}{x^{2+2k}\log x}dx =\frac{2C_k}{2(k-\ell)+1}E_1\bigl((2(k-\ell)+1)\log 2\bigr)
\]

and $M_{2\ell-1}=0,\ \ell=1,...,k$. We will also define $M_0:=1$. and use the sequence $M_0,...,M_s$ to define the space $\mathcal{P}_s^=$. 
On the other hand, the characteristic function 
\begin{eqnarray*}
\varphi_k(t)=\int_{-\infty}^{\infty}e^{-i tx}d\mu_k(x).
\end{eqnarray*}
 has  derivatives up to order $2k+1=s$ at $0$ and $\varphi_k^{(s)}(0)=0$.
Now, for each integer $n\ge N_k\in\mathbb{N}$ we will construct a probability measure $\mu_{n,k}$, such that  $\mu_{n,k}$ has $M_1, M_2,..., M_{2k}$ as its moments of orders up to $2k$ and $0$ as its moment of order $s=2k+1$.
    For   the Fourier transforms $\varphi_{n,k}$ of $\mu_{n,k}$ we will show that 
  \begin{eqnarray}\label{aproxpropk}
\lim_{n\to\infty}\sup_{t\ne 0}\bigl|\frac{\varphi_{n,k}(t)-\varphi_k(t)}{t^{2k+1}}\bigr|=0.
\end{eqnarray}
Fix a small positive $\varepsilon=\varepsilon(k)>0$ that will be specified 
later.  
The measures $\mu_{n,k}$ are constructed as follows:
First, $\mu_{n,k}$ is symmetric $\mu_{n,k}((-b,-a))=\mu_{n,k}((a,b))$ for all $0\le a <b$.  Hence the odd-order moments of $\mu_{n,k}$ (up to and including $s$) are zero. Second, $\mu_{n,k}$ is a sum of two nonnegative measures $\sigma_{n,k} +\gamma_{n,k}$, where    $\sigma_{n,k}$ is the truncation of $\mu_k$ to the interval $[-n,n]$: $\sigma_{n,k}(X)=\mu_k(X\cap [-n,n])$ for any Borel measurable set $X\subset \mathbb{R}$.

Specifically, $\sigma_{n,k}$ has the  density function  
 \begin{eqnarray*}
p_{n,k}(x)=
\begin{cases}
0, \textrm{ for } |x|<2,\\
\frac{C_k}{x^{2(k+1)}\log |x|}, \textrm{ for }n\ge |x|\ge 2,\\
0 , \textrm{ for } |x|>n 
\end{cases}
\end{eqnarray*}
 and we take $\gamma_{n,k}$ to have the following density function:  
%of $C_k\omega_{n,k}/x^{2(k+1)+e}$ for $|x|\ge n$:
\begin{eqnarray}\notag
r_{n,k}(x)=
\begin{cases}
0, \textrm{ for } |x|<n \\
\frac{C_k\omega_{n,j}}{|x|^{2(k+1)+e}} \textrm{ for }  |x|\in [2^{j-1}n,2^jn),j=1,...,k,\\
\frac{C_k\omega_{n,k+1}}{|x|^{2(k+1)+e}} \textrm{ for }  |x| \ge 2^{k}n.
\end{cases}
\end{eqnarray}  

We need to prove the following: 
\begin{lemma}\label{lemma:omegas}
For  small enough $\varepsilon>0$ and all $n>N(k)$ there exist  $\omega_{n,j}=\frac{n^\varepsilon}{\log n}\rho_{n,j}$ with $0<\rho_{n,j}<2, \ j=1,...,k+1$, such that the moment of $\mu_{n,k}=\sigma_{n,k}+\gamma_{n,k}$ of order $2\ell$ equals $M_{2\ell}, \ell=0,1,...,k$.
\end{lemma}
\begin{proof}
Consider the system of $k+1$ linear equations in $k+1$ unknowns $\omega_{n,j}, j=1,...,k+1$.
\begin{multline}
M_{2\ell}=2\int_2^n\frac{C_k}{x^{2(k-\ell)+2}\log x}dx+2\sum_{j=1}^k\int_{2^{j-1}n}^{2^jn}\frac{C_k\omega_{n,j}}{x^{2(k-\ell)+2+\varepsilon}}dx\\
+2\int_{2^kn}^{\infty}\frac{C_k\omega_{n,k+1}}{x^{2(k-\ell)+2+\varepsilon}}dx, \ \ell=0,1,...,k.
\end{multline}
Recall the values of the following integrals ($\varepsilon, m>0, \ q \geq 0$):

\begin{multline}
\int_m^{2m} x^{-(2(q+1)+\varepsilon)}dx=\frac{1}{2q+1+\varepsilon}m^{-(2q+1+\varepsilon)}(1-(1/2)^{2q+1+\varepsilon}),\\
\int_m^{\infty} x^{-(2(q+1)+\varepsilon)}dx=\frac{1}{2q+1+\varepsilon}m^{-(2q+1+\varepsilon)}.
\end{multline}

 Let $q=k-\ell=0,1,...,k.$ Then  we get the following system of $k+1$ equations in unknowns 
$\rho_{n,j}, j=1,...,k+1$: 
\begin{multline}\label{eq:mainsystem}
\big(\sum_{j=1}^k \rho_{n,j}(2^{-(j-1)(2q+1+\varepsilon)}-2^{-j(2q+1+\varepsilon)})\big)+\rho_{n,k+1}2^{-k(2q+1+\varepsilon)}=\\
(2q+1+\varepsilon) (n^{2q+1}\log n)\int_{n}^{\infty} \frac{1}{x^{2q+2}\log x}dx.
 %=\frac{2q+1+\varepsilon}{2q+1} (1-\delta_{2p,n}), 
\end{multline}

Applying the estimates

\begin{multline*}
\frac{1}{(2q+1)n^{2q+1}\log n} \biggl(1-\frac{2}{(2q+1)\log n}\biggr) < \int_n^{\infty} \frac{1}{x^{2q+2}\log x} dx\\
< \frac{1}{(2q+1)n^{2q+1}\log n}
\end{multline*}

we see that the right-hand side of the $q$-th equation lies between  $\frac{2q+1+\varepsilon}{(2q+1)} \biggl(1-\frac{2}{(2q+1)\log n}\biggr)$ and  $\frac{2q+1+\varepsilon}{2q+1}$. 
Set the right-hand side to be $\frac{2q+1+\varepsilon}{2q+1}$ and introduce new variables 
\begin{eqnarray*}
x_1=\rho_{n,1}, \quad x_j=\rho_{n,j}-\rho_{n,j-1} \textrm{ for }j=2,\ldots,k+1.
\end{eqnarray*}
Then we have the following system of equations:
\begin{eqnarray*}
\sum_{j=1}^{k+1} 2^{-(j-1)(2q+1)}x_j = \frac{2q+1+\varepsilon}{2q+1}, \ q=0,...,k.
\end{eqnarray*}
The coefficient matrix of this system is a submatrix of a generalized Vandermonde matrix,  hence nonsingular.   Cramer's rule together with continuity of determinant imply that, for sufficiently small $\varepsilon >0$, $x_1$ is close to $1$ (in particular positive).  Subtracting the equation numbered $q+1$ from the $q$-th equation for $q=0,\ldots,k-1$ eliminates $x_1$.  The resulting smaller system is also uniquely solvable.  Since the right-hand side of this system is close to zero if $\varepsilon >0$, the solutions are  also close to zero.  So $\rho_{n,1}$ is close to $1$ and so are other  $\rho_{n,j}$ if $\varepsilon$ is small enough.  Again by continuity of determinant, if $n > N(k)$, then the solutions of the system with right-hand side $\frac{2q+1+\varepsilon}{(2q+1)} \biggl(1-\frac{2}{(2q+1)\log n}\biggr)$ are all positive and bounded above by, say, $2$. The same holds for every system with right-hand side being a convex combination of $\frac{2q+1+\varepsilon}{(2q+1)} \biggl(1-\frac{2}{(2q+1)\log n}\biggr)$ and $\frac{2q+1+\varepsilon}{2q+1}$ . In particular we have  solutions $\rho_{n,j}$ of the system (\ref{eq:mainsystem}) satisfying $0<\rho_{n,j}<2,\  j =1,...,k+1$ for $\varepsilon>0$ small and all $n\gg 1$. 

\end{proof}

\begin{theorem}\label{theorem:conv2k+1}
For $\mu_k, \mu_{n,k}$ as above we have 
\[
\lim_{n\to\infty}\sup_{t\ne 0}\bigl|\frac{\varphi_{n,k}(t)-\varphi_(t)}{t^{2k+1}}\bigr|=0.
\]
\end{theorem}

\begin{proof}
Because of equality of moments of orders up to $s-1=2k$, the expression $\sup_{t\ne 0}\bigl|\frac{\varphi_{n,k}(t)-\varphi_k(t)}{t^{2k+1}}\bigr|$ is finite for each $n >N(k)$. Applying Taylor's formula with remainder in integral form we have 
\[
\varphi_{n,k}(t)-\varphi_k(t)=\int_0^1 \frac{(1-\lambda)^{2k-1}}{(2k-1)!}(\varphi_{n,k}^{(2k)}(\lambda t)-\varphi_k^{(2k)}(\lambda t)) t^{2k} d\lambda
\]
Hence 
\[
\sup_{t\ne 0}\biggl|\frac{\varphi_{n,k}(t)-\varphi_k(t)}{t^{2k+1}}\biggr| \leq \frac{1}{(2k-1)!}\sup_{\theta\ne 0}\biggl|\frac{\varphi_{n,k}^{(2k)}(\theta)-\varphi_k^{(2k)}(\theta)}{\theta}\biggr|.
\]

Reverting to the variable $t$, it is thus enough to prove that $$\lim_{n\to\infty}\sup_{t\ne 0}\biggl|\frac{\varphi_{n,k}^{(2k)}(t)-\varphi_k^{(2k)}(t)}{2C_kt}\biggr|=0.$$ 

Let $\varphi_{n,k}^{(2k)}(t)=f_{k,n}(t)+g_{n,k}(t)$ and $\varphi_k^{(2k)}(t)=f_{k,n}(t)+h_{k,n}(t)$, where $f_{k,n}(t)={\hat \sigma_{n,k}}^{(2k)}(t)$, $g_{k,n}(t)={\hat \gamma_{n,k}}^{(2k)}(t)$,   $h_{k,n}(t)={\hat \tau_{n,k}}^{(2k)}(t)$, and $\tau_{n,k}:=\mu_k-\sigma_{n,k}$. Recall that  $\mu_{n,k}$ and $\mu_k$ all have the same moment $M_{2k}$ of order $2k$ when $n \geq N(k)$. This gives $g_{n,k}(0)=h_{n,k}(0)$ for all $n \geq N(k)$.  Then

\begin{multline}
|\varphi_{n,k}^{(2k)}(t)-\varphi_k^{(2k)}(t)|=|g_{n,k}(t)-h_{n,k}(t)|=\\
|(g_{n,k}(t)-g_{n,k}(0))-(h_{n,k}(t)-h_{n,k}(0))| \le |g_{n,k}(0)-g_{n,k}(t)|+|h_{n,k}(0)-h_{n,k}(t)|.
\end{multline}

Due to symmetry, it is enough to assume that $t>0$.
Observe that

\begin{eqnarray}\label{eq:hfunc}
0\le \frac{h_{n,k}(0)-h_{n,k}(t)}{2C_kt}=\int_n^{\infty}\frac{1-\cos tx}{x^2\log x}dx < \frac{4}{\log n}
\end{eqnarray}

for $n \geq 4$ and all $t > 0$, which we already established in the previous section. For the functions $g_{n,k}$ we have the relation

\begin{multline*}
 \frac{g_{n,k}(0)-g_{n,k}(t)}{2C_k} =\sum_{j=1}^k \int_{2^{j-1}n}^{2^jn}\frac{(1-\cos tx)\omega_{n,j}}{x^{2+\varepsilon}}dx\\
+\int_{2^kn}^{\infty}\frac{(1-\cos tx)\omega_{n,k+1}}{x^{2+\varepsilon}}dx\\
\leq 2\frac{n^\varepsilon}{\log n} \int_n^\infty \frac{1-\cos tx}{x^{2+\varepsilon}}dx, 
\end{multline*}

since the coefficients $\omega_{j,n}$ satisfy $0 < \omega_{j,n} < \frac{2n^\varepsilon}{\log n}, \ j=1,...,k+1$. Proceeding as in the previous section, we have for $t \geq 1/n$ 

\begin{equation}\label{eq:gfunc1}
\frac{2n^\varepsilon}{\log n}\int_{n}^{\infty}\frac{(1-\cos tx)}{x^{2+\varepsilon}}dx < \frac{4}{n\log n} \leq \frac{4t}{\log n},
\end{equation}
while for  $0< t < 1/n$ we have 
\begin{multline}\label{eq:gfunc2}
\frac{2n^\varepsilon}{\log n}\int_{n}^{\infty}\frac{(1-\cos tx)}{x^{2+\varepsilon}}dx < \frac{2n^\varepsilon}{\log n}\biggl(\frac{t^{1+\varepsilon}}{1-\varepsilon}+2t^{1+\varepsilon}\biggr)\\
<\frac{8t}{\log n}
\end{multline}
when $\varepsilon < 1/2$. 
From the estimates (\ref{eq:hfunc}), (\ref{eq:gfunc1}) and (\ref{eq:gfunc2}) it follows that 
$$\lim_{n\to\infty}\sup_{t\ne 0}\biggl|\frac{\varphi_{n,k}^{(2k)}(t)-\varphi_k^{(2k)}(t)}{2C_kt}\biggr|=0,$$ hence also 
\[
\lim_{n\to\infty}\sup_{t\ne 0}\biggl|\frac{\varphi_{n,k}(t)-\varphi_k(t)}{t^{2k+1}}\biggr|=0.
\]
\end{proof}

The constructions allow us to establish the following:

\begin{theorem}\label{theorem: counterex}  Let $s =2k+1$  and let  $M_0,...,M_{2k}$ be the moments of the measure $\mu_k$ constructed in Subsection \ref{subsection:greater}. Let $N \geq 1$ and let 
$\mathcal{P}_s^=$ denote the space of all probability measures  $\nu$ in $\mathbb{R}^N$ for which $\int_{\mathbb{R}^N}v^\beta d\nu(v)= M_\beta, \ |\beta|=0,1,...,2k$ and $\int_{\mathbb{R}^N}v^\beta d\nu(v)=0, \ |\beta|=s$. The metric space $(\mathcal{P}_s^=, d_s)$ is not complete.
\end{theorem}

\begin{proof} When $N=1$, Theorem \ref{theorem:conv2k+1} yields that $d_s(\mu_{n,k},\mu) \to 0$ as $n \to \infty$ (in particular, the sequence $\mu_{n,k}$ is Cauchy in $\mathcal{P}_s$) with $\mu_{n,k} \in \mathcal{P}_s^=$, but $\mu \not \in \mathcal{P}_s^=$. When $N >1$,   counterexamples can also be obtained. Let  $s=2k+1, k \geq 1$. Introduce the measures $\nu_{n,k}$ and $\nu_k$ with characteristic functions respectively $\psi_{n,k}(t_1,...,t_N)=\varphi_{n,k}(t_1)...\varphi_{n,k}(t_N)$ and $\psi(t_1,...,t_N)=\varphi(t_1)...\varphi(t_N)$ with $\varphi_{n,k}, \varphi_n$ as in  Theorem \ref{theorem:conv2k+1}, $n \gg 1$. Define the space $\mathcal{P}_s^=$ as the space of all probability measures   in $\mathbb{R}^N$ whose moment of order $\beta$ agrees with the moment of order $\beta$ of $\nu_{n,k}$ for all $\beta$ such that $|\beta|\leq s$. This space is not complete in the metric $d_s$. Take  e.g. $\beta=(s,0,...,0)$. Then 
\[
\frac{|\psi_{n,k}(t)-\psi_k(t)|}{\|t\|^s} \leq c(N) \frac{|\varphi_{n,k}(t_1)-\varphi_k(t_1)|}{|t_1|^s},
\]
so $d_s(\nu_{n,k},\nu_k) \to 0$ as $n \to \infty$, but $\nu_k$ does not have the moment of order $(s,0,...,0)$.
\end{proof}

\textbf{Acknowledgments:} I thank the anonymous referee(s) for suggesting the idea for Example \ref{example:Cauchy},  for bringing to my attentions the references \cite{BG} and \cite{TT} and for pointing out some shortcomings of a previous version of this note. I also thank Shmuel Friedland for valuable discussions of problems related to Section 4. Some results of this note were presented at the conference ``Geometry of Banach Spaces and related topics'',  Krak\'ow, June 8-10, 2017, in honor of Professor Grzegorz Lewicki. I thank the organizers for the invitation and to participants of the seminar ``Methods of Approximation Theory'' at Jagiellonian University, Krak\'ow, for useful feedback. Finally, I thank  my employer American Mathematical Society for supporting my study leave in Krak\'ow in February--June 2017 and the Chair of Approximation Theory of the Institute of Mathematics at Jagiellonian University (in particular Marta Kosek, Leokadia Bia\l as-Cie\.z and Grzegorz Lewicki) for additional organizational and financial support.

\end{document}